\numberwithin{equation}{section}
\theoremstyle{plain}
\newtheorem{prop}{Proposition}
\newtheorem{theo}[prop]{Theorem}
\newtheorem{coro}[prop]{Corollary}
\newtheorem{lemm}[prop]{Lemma}
\theoremstyle{definition}
\def\lra{\longrightarrow}
\def\ra{\rightarrow}
\def\cI{{\mathcal I}}
\def\cK{{\mathcal K}}
\def\cL{{\mathcal L}}
\def\cM{{\mathcal M}}
\def\cO{{\mathcal O}}
\def\cQ{{\mathcal Q}}
\def\cV{{\mathcal V}}
\def\cW{{\mathcal W}}
\def\cX{{\mathcal X}}
\def\cY{{\mathcal Y}}
\def\bP{{\mathbb P}}
\def\bZ{{\mathbb Z}}
\def\bN{{\mathbb N}}
\def\bC{{\mathbb C}}
\def\bF{{\mathbb F}}
\def\CH{\mathrm{CH}}
\def\Bl{\mathrm{Bl}}
\def\Pic{\mathrm{Pic}}
\def\Gr{\mathrm{Gr}}
\def\lim{\mathrm{lim}}
\def\Prym{\mathrm{Prym}}
\def\IJ{\mathrm{IJ}}
\def\wcV{\widetilde{\cV}}
\def\wcX{\widetilde{\cX}}
\author{Brendan Hassett}
\address{Department of Mathematics\\
Brown University \\
Box 1917 
151 Thayer Street
Providence, RI 02912 \\
USA}
\email{bhassett@math.brown.edu}
\author{Yuri Tschinkel}
\address{Courant Institute\\
                New York University \\
                New York, NY 10012 \\
                USA }
\email{tschinkel@cims.nyu.edu}
\address{Simons Foundation\\
160 Fifth Avenue\\
New York, NY 10010\\
USA}
\title[On stable rationality]{On stable rationality of Fano threefolds and del Pezzo fibrations}
\begin{document}
\date{January 25, 2016}

\maketitle

\section{Introduction}

Recent breakthroughs of Voisin \cite{Voisin}, developed and amplified by
Colliot-Th\'el\`ene--Pirutka \cite{ct-pirutka,ct-pir-cyclic},
Beauville \cite{beau-6}, and Totaro \cite{totaro-JAMS}, have reshaped
the classical study of rationality questions for higher-dimensional
varieties. Failure of stable rationality is now known for large 
classes of rationally-connected threefolds. The key tool is (Chow-theoretic)
integral decompositions of the diagonal, which necessarily exist
for stably rational varieties. Integral decompositions of the diagonal
specialize well, even to mildly singular varieties, connecting 
logically the stable rationality of various classes of varieties.
This puts a premium on discovering appropriate degenerations linking
different classes of rationally connected varieties.
Using these techniques, we prove:

\begin{theo} \label{theo:main}
Let $X$ be a very general smooth non-rational Fano threefold over $\bC$.
Assume that $X$ is not birational to a cubic threefold. Then $X$ is not stably rational. 
\end{theo}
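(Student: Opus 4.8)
The plan is to follow the degeneration paradigm --- integral decompositions of the diagonal, specialization, and unramified cohomology --- applied along the classification of Fano threefolds and their birational models as del Pezzo fibrations and conic bundles.

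\emph{Reduction to a finite list.} By the classification of smooth Fano threefolds --- Iskovskikh in Picard rank one, Mori--Mukai in higher Picard rank --- the deformation families whose very general member is non-rational and not birational to a cubic threefold form a finite, explicit list. Each such family is irreducible, and once the non--stably-rational locus is shown to be nonempty it is automatically the complement of a countable union of proper closed subvarieties; hence it suffices to treat one member of each family. For quartic double solids this is due to Voisin \cite{Voisin}; for quartic threefolds, to Colliot-Th\'el\`ene--Pirutka \cite{ct-pirutka}; for sextic double solids, to Beauville \cite{beau-6} (with related cyclic-cover and hypersurface results in \cite{ct-pir-cyclic, totaro-JAMS}). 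These I would simply invoke; the remaining families are the substance of the argument.

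\emph{The specialization input.} I will use the principle behind \cite{Voisin, ct-pirutka}: let $\pi\colon \cX \to B$ be flat projective over a smooth pointed curve $(B,0)$ with geometrically integral fibers, whose very general fiber is a smooth Fano threefold in the family under consideration; suppose the special fiber $X_0$ admits a resolution $f\colon \wcX_0 \to X_0$ that is \emph{universally $\CH_0$-trivial} --- e.g.\ $X_0$ has only ordinary double points, or more generally singularities whose exceptional divisors have $\CH_0$-trivial fibers. If $\wcX_0$ carries a nonzero class in $H^2_{\mathrm{nr}}(\bC(X_0)/\bC, \bZ/2)$, equivalently in $\Br(\wcX_0)[2]$, then the very general fiber is not universally $\CH_0$-trivial, hence not retract rational, hence not stably rational. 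So for each remaining family it is enough to exhibit such a degeneration.

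\emph{Building the degenerations.} For this I would pass to birational models: by the classification each remaining $X$ is birational to a del Pezzo fibration $X \dashrightarrow \bP^1$ of degree $1$, $2$ or $3$ --- for instance $V_1$ and the quartic double solid become del Pezzo fibrations of degrees $1$ and $2$ after blowing up the base locus of a suitable pencil --- or to a standard conic bundle over a rational surface; it is here that the stable rationality of del Pezzo fibrations must be studied in its own right, which accounts for the title. Within these families I would degenerate the total space so that the special fiber becomes birational to a conic bundle over a rational surface whose discriminant curve sits in ``Artin--Mumford position,'' so that the Artin--Mumford computation --- and its refinements for conic bundles over surfaces, in the Colliot-Th\'el\`ene--Ojanguren spirit --- forces $H^2_{\mathrm{nr}}(\,\cdot\,,\bZ/2) \ne 0$ on the resolution. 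One has to arrange that the degeneration remains in the closure of the prescribed Fano family and that $\cX$ and $X_0$ acquire only the mild singularities above; a case-by-case check that $f$ is universally $\CH_0$-trivial and that the unramified class survives then finishes each family.

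\emph{The main difficulty.} The hard part will be exactly this last construction for del Pezzo fibrations of degrees $1$ and $2$ and for the Grassmannian-section Fano threefold of genus $6$: one must degenerate the branch or discriminant data \emph{enough} to produce a nontrivial unramified class, yet keep the singularities of $X_0$ mild enough that the resolution stays universally $\CH_0$-trivial --- the same tension already present in Voisin's treatment of the quartic double solid, now to be resolved uniformly across the whole list.
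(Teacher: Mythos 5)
There is a genuine gap. Your proposal correctly names the ambient paradigm---specialization of integral decompositions of the diagonal to mildly singular fibers with universally $\CH_0$-trivial resolutions, detected by a Brauer/unramified class on a conic-bundle degeneration---and this is indeed the paper's paradigm. But everything that actually makes the theorem true is deferred to your ``main difficulty'' paragraph, and the concrete route you sketch for getting there would not work. You propose to realize each remaining smooth Fano birationally as a del Pezzo fibration of degree $1$, $2$ or $3$, or as a standard conic bundle, and then degenerate inside that model. For several of the key families no such birational model exists: the general complete intersection of a quadric and a cubic is birationally rigid, and the general degree-$10$ threefold in $\Gr(2,5)$ has no known conic-bundle or low-degree del Pezzo structure either, so there is no fibration of the smooth variety to degenerate. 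The paper's mechanism is the reverse: one degenerates the Fano threefold itself to a \emph{nodal} threefold that happens to be a birational model of a \emph{quartic} (degree four) del Pezzo fibration of height $14$--$22$ (e.g.\ Corollary~\ref{coro:sexticheight22} for sextic double solids, the height $20,18,16,14$ dictionaries of \cite{HTCEJM} for the quartic, $(2,3)$, three-quadric, and degree-$10$ cases), proves Theorem~\ref{theo:dPnotSR} for those fibrations via Alexeev's blow-up-a-section/project-from-a-line reduction to conic bundles over $\bF_0,\bF_1$ together with the criterion of Section~\ref{sect:HKT} (reducible nodal discriminant with nontrivial cover on each component, plus full monodromy from \cite{HKT}), and then transfers the failure of decomposition back to the very general smooth Fano by Voisin's nodal specialization theorem. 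None of this machinery---heights, irreducibility of the del Pezzo moduli, the spectral-curve monodromy, the explicit nodal models---appears in your outline, and without it the cases you flag as hard remain unproved.

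Two further points. For the higher Picard rank families, the natural conic-bundle discriminants are \emph{not} general in their linear series (they are symmetric determinantal, with a distinguished two-torsion class), so ``degenerate the discriminant into Artin--Mumford position inside the family'' is exactly the delicate step; the paper avoids it by different degenerations---the octic eight-tangent to a conic for the double Veronese cone, and degenerations to quartic double solids with two or three nodes (where \cite[Th.~1.1]{Voisin} applies) for the $\bF(1,2)$, $(2,4)$ and $(2,2,2)$ covers. Finally, your reduction step is misstated: nonemptiness of the non-stably-rational locus does not by itself make it the complement of a countable union of closed subsets; the correct statement, which the paper uses, is that the locus of fibers admitting an integral decomposition of the diagonal is a countable union of closed subsets, so a single suitable degeneration rules out the decomposition---hence stable rationality---for the very general member.
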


While smooth cubic threefolds are all known to be non-rational, determining whether or not they are
stably rational remains an open problem. No smooth cubic threefolds are known to be stably rational.
However, Voisin \cite{VoisinJEMS} has shown that the cubic threefolds where her techniques
fail to apply, i.e., those  admitting an integral 
decomposition of the diagonal, are dense in moduli. 

Several common geometric threads, developed in collaboration with
Kresch, unify our
approach to Theorem~\ref{theo:main}. In \cite{HKT-conic},
we showed that very general conic bundles over rational surfaces 
with sufficiently large discriminant fail to be stably rational.
The conic bundle structures on cubic threefolds 
arising from projection from a line have quintic plane curves as their discriminants---too small for our techniques to apply.
Nevertheless, conic bundles are a useful tool for analyzing stable
rationality of Fano threefolds.
Second, in \cite{HKT} we classified quartic del Pezzo surfaces with
mild singular fibers and maximal monodromy; previously \cite{HTCEJM}
we showed that a number of these arise as specializations of
Fano threefolds of index one. Together, these facilitate a
streamlined approach to most families of non-rational Fano threefolds.

{\bf Acknowledgments:} The first author was supported by 
NSF grant 1551514. We are grateful to Andrew Kresch for his
foundational contributions that made this research possible.
We also benefitted from conversations with Alena Pirutka.
Kresch and Pirutka also offered helpful feedback on early drafts of
this paper.

\section{Organization of the cases}
\label{sect:org}

The tables in \cite{isk-prokhorov} enumerate
non-rational Fano threefolds; see also the summary in \cite[Section 2.3]{beau-survey}, which includes references to methods used to establish non-rationality. 
In the results that follow, `very general' refers to the complement to a 
countable union of Zariski-closed proper subsets of the families enumerated
in this section.

Let $V$ be a Fano threefold of Picard rank one. 
Write $\Pic(V)=\bZ h$, for some ample $h$, and express the anti-canonical
class $-K_V=rh$. Let $h^{1,2}=\dim H^1(V,\Omega^2_V)$, which
equals the dimension of the intermediate Jacobian $\IJ(V)$.
We enumerate non-rational Fano threefolds $V$ of Picard rank one, using the invariants
$(r, -K_V^3, h^{1,2})$.

\begin{itemize}
\item $(1,2,52)$: double cover of $\bP^3$ ramified in a surface of degree 6,
unirationality is unknown, very general $V$ are not stably rational \cite{beau-6}
\item $(1,4,30)$: quartic in $\bP^4$, unirationality is unknown, very general $V$ are not stably rational \cite{ct-pirutka} 
\item $(1,6,20)$: intersection of a quadric and a cubic, unirational
\item $(1,8,14)$: intersection of three quadrics in $\bP^6$, unirational
\item $(1,10,10)$: section of $\Gr(2,5)$ by a subspace of codimension 2 and a quadric, general such $V$ are non-rational, all are unirational
\item $(1,14,5)$: section of  $\Gr(2,5)$ by a subspace of codimension 5, unirational
\item $(2, 8, 21)$: $V_1$, unirationality is unknown
\item
$(2, 8\cdot 2, 10)$:
$V_2$,  double cover of $\bP^3$, ramified in a smooth quartic, unirational, 
very general $V_2$ are not stably rational \cite{Voisin,beau-survey}
\item $(2, 8\cdot 3, 5)$: $V_3$, cubic in $\bP^4$, unirational
\end{itemize}

We next list non-rational minimal Fano threefolds of Picard rank $\ge 2$,
using the invariants $(-K_V^3, h^{1,2})$. All of these admit conic bundle
structures, induced by projection onto a rational surface.

\begin{itemize}
\item $(6,20)$: double cover of $\bP^1\times \bP^2$ branched in a divisor of bidegree $(2,4)$, unirational
\item $(12,9)$: divisor in $\bP^2\times \bP^2$ of bidegree $(2,2)$,  or a double cover of $\bF(1,2)\subset \bP^2\times \bP^2$ of bidegree $(1,1)$, ramified in 
$B\in |-K_{\bF(1,2)}|$, unirational
\item $(14,9)$: double cover of $V_7\simeq \Bl_p(\bP^3)$, branched in $B\in |-K_{V_7}|$, unirational 
\item $(12,8)$: double cover of $\bP^1\times \bP^1\times \bP^1$
branched in a divisor of degree  $(2,2,2)$, unirational
\end{itemize}

\section{Conic bundles over rational surfaces}
\label{sect:HKT}
 
We recall the set-up for the results of \cite{HKT-conic}:
Let $S$ be a smooth projective rational surface over $\bC$. 
Fix a linear system $\cL$ of effective divisors on $S$ such
that the generic member is smooth and irreducible. Consider 
the space of pairs
$$
\left\{\begin{array}{cc}
D\in \cL \text{ nodal and reduced, } 
D'\ra D
 \text{ \'etale of degree two}
\end{array} \right\} \ra \cL
$$
and let $\cM$ be one of its irreducible components. 
Assume it contains a
point $\{D'\ra D\}$ with the following
properties: 
\begin{itemize}
\item{the nodes of $D$ are disjoint from the base locus of $\cL$;}
\item{$D$ is reducible and for each irreducible component $D_1 \subset D$
the induced cover $D'\times_D D_1 \ra D_1$ is non-trivial.}
\end{itemize}

Results of Artin and Mumford \cite{ArtMum} and Sarkisov \cite{Sar}
allow us to assign to each point of $\cM$ a 
conic bundle $X\ra S$, unique up to birational equivalences
over $S$. Essentially, $\cM$ parametrizes ramification
data for the associated Brauer elements in the function
field of $S$, which determine them as $S$ is rational.
The condition on the distinguished point implies that the corresponding
conic bundle has non-trivial Brauer group.

Using Voisin's decomposition of the diagonal technique,
we proved in \cite{HKT-conic} that a very general point $[X] \in \cM$
parametrizes a threefold that fails to be stably rational.

We first observe an obvious strengthening of the main theorem of
\cite{HKT-conic}: $\cM$ need not dominate the linear series $\cL$ but
can be any smooth irreducible parameter space of reduced
nodal curves $D\in \cL$ with \'etale double covers $D'\ra D$.
Let $\cK$ denote the image of $\cM$ in $\cL$, so we have
$$\cM \stackrel{\varphi}{\ra} \cK \subset \cL$$
where $\varphi$ is \'etale and a covering space over
the open subset parametrizing smooth curves.
We still insist that there is a reducible member
whose nodes are disjoint from the base locus of $\cK$,
such that the cover over each component is non-trivial.

Second, our result is easiest to apply in cases where the monodromy
action is large, e.g., when $\cM$ parametrizes {\em all}
non-trivial double covers of the generic point $[D] \in \cK$, 
or equivalently, when the monodromy representation on 
$H^1(D,\bZ/2\bZ) \setminus \{0\}$
is transitive. This is the case when $S=\bP^2$ and $\cL$
parametrizes plane curves of even degree;
in odd degree there are two such orbits \cite{Beau86}.
Large monodromy actions make it easier to decide
which component contains
a given distinguished point $\{D'\ra D\}$.

\section{Classification of quartic del Pezzo fibrations and stable rationality}
\label{sect:quarticdP}

Consider quartic del Pezzo surface fibrations
$\pi:\cX \ra \bP^1$ satisfying two non-degeneracy conditions:
\begin{itemize}
\item{the discriminant is square-free, i.e., $\cX$ is regular and 
the degenerate fibers are complete intersections of two quadrics
with at most one ordinary singularity;}
\item{the monodromy action on the Picard groups of the fibers
is the full Weyl group $W({\mathsf D}_5)$.}
\end{itemize}
The fundamental invariant of such fibrations is the {\em height}
$$h(\cX)=\deg(c_1(\omega_{\pi})^3)=-2\deg(\pi_*\omega_{\pi}^{-1}),$$
an even integer (see \cite{HKT,HTCEJM} for more background). 
The principal results we require are \cite[Th.~10.2]{HKT}:
\begin{itemize}
\item{under the non-degeneracy conditions we have $h(\cX)\ge 8$;}
\item{when $h(\cX)=8$ or $10$, the moduli space of these fibrations
has two irreducible components;}
\item{when $h(\cX)\ge 12$ the moduli space is irreducible.}
\end{itemize}

When $h(\cX)=8,10,12$ the total space $\cX$ is either 
rational or birational to a cubic threefold; see
\cite[\S11]{HKT} and \cite[\S8-10]{HTCEJM} for details.
Thus we will focus on fibrations with heights at least fourteen.
Note that Alexeev \cite{alexeev} established non-rationality
in these cases by relating the del Pezzo fibrations
to conic bundles. (We will review this below.)

\begin{theo} \label{theo:dPnotSR}
Let $\cX \ra \bP^1$ be a fibration in
quartic del Pezzo surfaces satisfying our
non-degeneracy conditions, with $h(\cX)\ge 14$,
and very general in moduli.
Then $\cX$ fails to admit an integral decomposition
of the diagonal and thus is not stably rational.
\end{theo}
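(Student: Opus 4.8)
The plan is to reduce Theorem~\ref{theo:dPnotSR} to the conic bundle result recalled in Section~\ref{sect:HKT}, following Alexeev's observation that quartic del Pezzo fibrations carry conic bundle structures, and then to verify that the resulting conic bundle lands in a component $\cM$ of the kind to which the strengthened \cite{HKT-conic} theorem applies. First I would recall the geometry: a smooth quartic del Pezzo surface $Y$ is a blow-up of $\bP^2$ in five points, and each of the five pencils of conics on $Y$ (equivalently, each $g^1_2$) exhibits $Y \to \bP^1$ as a conic bundle with four degenerate fibers. Performing this fiberwise over the base $\bP^1$ of $\pi:\cX \to \bP^1$—after a base change or a choice of section trivializing the relevant part of the monodromy—produces a conic bundle $X \to S$ over a rational surface $S$ which is a $\bP^1$-bundle (or blow-up thereof) over $\bP^1$, with $X$ birational to $\cX$ over $\bP^1$. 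The discriminant curve $D \subset S$ is the locus of degenerate conics; its class and the number of its components are governed by the height $h(\cX)$ and by the $W({\mathsf D}_5)$ monodromy.

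Next I would identify the parameter space $\cM$. The key point is that the full Weyl group monodromy $W({\mathsf D}_5)$ on the Picard groups of the fibers is exactly what controls the monodromy of the discriminant cover $D' \to D$: the double cover is the one whose fiber records the two rulings of a degenerate (rank-three) quadric, i.e. the Brauer-theoretic ramification data attached to the conic bundle. With $h(\cX) \ge 14$ the discriminant has large degree, and the $W({\mathsf D}_5)$-monodromy forces the monodromy on $H^1(D,\bZ/2\bZ)\setminus\{0\}$ to be as large as possible within the relevant component—in particular it forces the existence, in the closure of the family, of a distinguished reducible member $\{D' \to D\}$ whose nodes avoid the base locus and whose induced cover is non-trivial on each component. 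Producing such a degeneration explicitly is where the classification results of \cite{HKT} (the description of the components of moduli for each height, the square-free discriminant hypothesis ensuring the only degenerate fibers are one-nodal intersections of two quadrics) do the real work: one must exhibit a one-parameter degeneration of $\cX$ whose associated discriminant breaks into components each carrying a non-trivial restricted cover, while keeping the total space regular and the covers étale.

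Given such a $\cM$, the strengthened main theorem of \cite{HKT-conic} recalled in Section~\ref{sect:HKT} applies directly: a very general $[X]\in\cM$ fails to admit an integral decomposition of the diagonal. Since admitting such a decomposition is a stable birational invariant (it specializes well, and $\cX$ is stably birational to $X$), the very general $\cX$ with $h(\cX)\ge 14$ fails to admit an integral decomposition of the diagonal, hence is not stably rational. The main obstacle I expect is the middle step: checking that the conic bundle $X \to S$ built from $\cX$ actually represents a point of a component $\cM$ satisfying the hypotheses of Section~\ref{sect:HKT}—concretely, locating the required reducible discriminant member with the component-wise non-triviality of the cover, and confirming that the monodromy component computed via $W({\mathsf D}_5)$ matches the component of $\cM$ containing it. The case analysis by height (and the two-component phenomena at low height, which is why the hypothesis $h(\cX)\ge 14$ is needed) is exactly the bookkeeping that has to be carried out carefully here; once the correct $\cM$ is pinned down, the rest is a citation.
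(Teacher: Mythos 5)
Your overall plan---pass to a conic bundle over a rational surface and quote the strengthened result of \S\ref{sect:HKT}---is indeed the paper's route, but two steps as you describe them would not go through. First, the construction of the conic bundle: the fiberwise pencils of conics on a quartic del Pezzo surface (there are ten, not five) are permuted transitively by $W({\mathsf D}_5)$, so under the standing full-monodromy hypothesis none of them is defined over $\bC(\bP^1)$; a base change would produce a threefold no longer birational to $\cX$, so the failure of stable rationality would not descend to $\cX$, and choosing a section does not ``trivialize the relevant part of the monodromy.'' The construction that actually works (Alexeev's, used in the paper) is different: blow up a section of $\cX\ra\bP^1$ not meeting the lines of the generic fiber, obtaining a cubic surface fibration with a distinguished line, and project from that line; this yields a conic bundle over a ruled surface $S$ ($\bF_1$ when $h(\cX)=4n+2$, $\bF_0$ when $h(\cX)=4n$), birational to $\cX$, with discriminant $D$ of class $5\xi+(n+3)f$, resp.\ bidegree $(n,5)$.

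Second, the identification of the component $\cM$ and the distinguished reducible member. Monodromy does not ``force the existence'' of a reducible degeneration, and the degeneration is not a one-parameter family of del Pezzo fibrations with regular total space: it takes place purely at the level of the discriminant curve and its double cover inside the linear series $|D|$ on $S$. What makes this legitimate are two inputs you do not supply: (i) the dictionary of \cite[Th.~10.1]{HKT} showing that the discriminants arising from del Pezzo fibrations are \emph{generic} in $|D|$, together with \cite[\S3]{HKT} giving full symplectic monodromy on $H^1(D,\bZ/2\bZ)$, hence transitivity on non-trivial covers, so $\cM$ contains all non-trivial covers of the generic member; and (ii) an explicit reducible nodal member $D_1\cup D_2$ with both components of positive genus (in $\bF_1$ take $D_1\in|2\xi+3f|$ and $D_2\in|3\xi+nf|$; in $\bF_0$ take bidegrees $(2,2)$ and $(n-2,3)$), plus the observation that every \'etale double cover of $D_1\cup D_2$ deforms into $\cM$ because a specialization $D\rightsquigarrow D_1\cup D_2$ collapses vanishing cycles on $H_1$. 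These are exactly the points you flag as ``the main obstacle'' and leave undone, and they are where the hypothesis $h(\cX)\ge 14$ enters (positive genus of both components, and the fact that lower heights give rational or cubic-birational total spaces), rather than through the two-component phenomenon at heights $8$ and $10$.
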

\begin{proof}
We first reduce to the conic bundle case, following Alexeev.
Choose a section $\sigma:\bP^1 \ra \cX$, which we may assume
is not contained in any line of the generic fiber. 
Blowing up this section gives a cubic surface fibration with a
distinguished line and projecting from this line gives a conic
fibration:
$$\begin{array}{ccccc}
\cL & \hookrightarrow & \wcX & \stackrel{\pi}{\ra} & S \\
    & \searrow & \downarrow & \swarrow &  \\
    &	       & \bP^1 &   &
\end{array}$$
Here $S\ra \bP^1$ is a rational ruled surface.

The conic bundle structure over $S$ yields a discriminant
curve $D\subset S$ and an \'etale double cover $D'\ra D$.
Note that $D'\ra D$ coincides with the spectral data introduced
in \cite[\S\S2,8]{HKT} and $S$ is the
natural ruled surface containing $D$ described
in \cite[\S10]{HKT}.

Using \cite[\S6]{HKT}\, we pin down the numerical invariants:
Suppose first that $h(\cX)=4n+2$ for $n\ge 3$. Here the surface
$S\simeq \bF_1$, the Hirzebruch surface. Let $\xi$ denote
the $(-1)$-curve and $f$ the class of a fiber.
Then $[D]=5\xi+(n+3)f$ which has genus $h(\cX)-4$.
If $h(\cX)=4n$ for $n\ge 4$ then 
$S\simeq \bF_0\simeq \bP^1 \times \bP^1$. Here $D$ has 
bidegree $(n,5)$, also of genus $h(\cX)-4$.

The fundamental dictionary between del Pezzo fibrations
and spectral data \cite[Th.~10.1]{HKT} implies that the
$D \subset S$ arising from del Pezzo fibrations
are generic in the linear series $\cL=|D|$. The
analysis of \cite[\S3]{HKT}
shows that the monodromy acts on $H^1(D,\bZ/2\bZ)$ via
the full symplectic group, hence transitively on the non-trivial
elements. To apply the main result of Section~\ref{sect:HKT},
it suffices to exhibit a distinguished point in $|D|$,
i.e., a reducible curve $D=D_1\cup D_2$ with $D_1$ and $D_2$
smooth of positive genus, intersecting transversally.

Note that any \'etale double cover on $D_1\cup D_2$
deforms to the parameter space $\cM$.
Indeed, such double covers correspond to 
homomorphisms
$$H_1(D_1\cup D_2,\bZ) \ra \bZ/2\bZ;$$
a specialization $D\rightsquigarrow D_1\cup D_2$
induces a homomorphism 
$$H_1(D,\bZ)\ra H_1(D_1\cup D_2,\bZ)$$
collapsing vanishing cycles, thus a double cover of $D$ 
via composition.

Producing the reducible curve is simple:
For $S=\bF_1$ take $D_1 \in |2\xi+3f|$, the proper
transform of a cubic plane curve, and $D_2 \in |3\xi+nf|$,
a smooth curve of genus $2n-5 \ge 1$.
For $S=\bF_0\simeq \bP^1 \times \bP^1$ take
$D_1$ of bidegree $(2,2)$, an elliptic curve, and $D_2$
of bidegree $(n-2,3)$, of genus $2n-6\ge 2$. 
\end{proof}

\section{Quartic del Pezzo surfaces of height $22$}
 
A quartic del Pezzo surface of height $22$ may be constructed as follows
\cite[\S 4, Case 5]{HTCEJM}:
Let $V=\cO_{\bP^1} \oplus \cO_{\bP^1}(1)^{\oplus 4}$ and consider
the injection
$$V^* \hookrightarrow \cO_{\bP^1}^{\oplus 9}$$
associated with the global sections of $V$. Then we have morphisms
$$\bP(V^*) \hookrightarrow \bP^1 \times \bP^8 \stackrel{\pi_2}{\lra} \bP^8,$$
where the composition collapses the distinguished section
$$
\sigma:\bP^1 \ra \bP(V^*)
$$
arising from the $\cO_{\bP^1}$ summand.
We use $\pi=\pi_1$ for the fibration over $\bP^1$.
Let $\xi=c_1(\cO_{\bP(V^*)}(1))$ and $h=\pi^*(c_1(\cO_{\bP^1}(1)))$
so that $\xi^5=4\xi^4h$.

A generic height $22$ quartic del Pezzo $\cX \ra \bP^1$ admits
an embedding
$$\begin{array}{ccc}
\cX & \hookrightarrow & \bP(V^*) \\
    & \searrow    & \downarrow \\
    &		  & \bP^1 
\end{array}
$$
as a complete intersection of divisors of degrees $2\xi-h$ and
$2\xi$. Let $\cQ \ra \bP^1$ denote the former divisor, which is
canonically determined. It necessarily contains the section 
$\sigma$. The second divisor $\cQ'$ is a pull-back of a quadric hypersurface
via $\pi_2$; it is typically disjoint from $\sigma$.

Consider projection from the section $\sigma$:
$$\varpi: \bP(V^*) \ra \bP(\cO_{\bP^1}(-1)^{\oplus 4}) \simeq \bP^1 \times \bP^3$$
inducing a birational map 
$$\cQ \stackrel{\sim}{\dashrightarrow}\bP^1 \times \bP^3.$$
Restricting to $\cX$ yields a generically finite morphism
$$\phi:\cX \ra \bP^3.$$
We compute its invariants via intersections in $\bP(V^*)$. 
The pullback of the hyperplane class on $\bP^3$ via $\phi$
is $\xi-h$.
First, we have
$$\deg(\phi)=(\xi-h)^3(2\xi)(2\xi-h)=2$$
which means $\phi$ is a double cover. Its ramification divisor
$$R=K_{\cX}-\phi^*K_{\bP^4}=-\xi+h+4(\xi-h)=3(\xi-h)$$
maps to the branch surface $B\subset \bP^3$ of degree six. 

We interpret when $\phi$ fails to be finite. Points $p\in \bP^3$ 
correspond to line subbundles
$$\sigma(\bP^1) \hookrightarrow \cL(p)\simeq 
\bP(\cO_{\bP^1} \oplus \cO_{\bP^1}(-1))\simeq \bF_1 \hookrightarrow \bP(V^*)$$
where $\bF_1$ is the blowup of the projective plane at a point.
Thus $\cQ \cap \cL(p)$ is the union of the $(-1)$-curve and the
proper transform of a line $\ell$ and $\cQ'\cap \cL(p)$ is the 
proper transform of a conic {\em disjoint} from the $(-1)$-curve.
These typically meet at two points but the conic might contain
the line $\ell$, i.e., $\phi^{-1}(p)=\ell$; 
this is a codimension-three condition on $p$ and corresponds
to singular points of $B$. 

How many singularities do we expect on $B$? For the moment, assume
these are ordinary double points for generic $\cX$. We have
\cite[Cor.~3.5]{HTCEJM}:
$$h^1(\Omega^2_{\cX})=22-5=17$$
but for a generic sextic double solid $\cV$ we have 
$h^1(\Omega^2_{\cV})=52$ \cite{isk-prokhorov}. If $\cV$ admits
$n$ ordinary singularities
and rank $r$ class group then we have
$$
52=n-r+1+h^1(\Omega^2_{\wcV}),
$$
where $\wcV\ra \cV$ is the blowup of the singularities.
This can be seen by comparing the topological Euler characteristics and
class groups of $\cV$ and $\wcV$.
We must have $r=2$ if $\cV$ is a contraction of a del Pezzo
fibration $\cX\ra \bP^1$. Thus we expect $n=36$.

\begin{prop}
Let $y_0,y_1,y_2,y_3$ denote coordinates on $\bP^3$.
\begin{itemize}
\item{
The equation for $B$ takes the form $\det(M)=0$
where 
$$M= \left( \begin{matrix} L^2  & Q_0 & Q_1 \\
			   Q_0 & Q'_{00} & Q'_{01} \\
			   Q_0 & Q'_{01} & Q'_{11}
		\end{matrix}
    \right),$$
with $L$ linear in the $y_i$ and the remaining entries
quadratic.}
\item{
The generic such matrix arises from a height $22$ fibration
in quartic del Pezzo surfaces.}
\item{
The singularities of $B$ are of two types. The first type corresponds
to the vanishing of the $2\times 2$ minors of $M$; there
are $32$ such singularities. The second type corresponds to 
the locus 
$$L=Q_0=Q_1=0;$$
there are four such singularities.
}
\end{itemize}
\end{prop}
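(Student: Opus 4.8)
The plan is to make the double cover $\phi\colon\cX\to\bP^3$ completely explicit fibre by fibre and then read off its branch locus. Pick fibrewise coordinates $[z:x_1:\dots:x_4]$ on $\bP(V^*)\to\bP^1$ adapted to $V^*=\cO_{\bP^1}\oplus\cO_{\bP^1}(-1)^{\oplus4}$, so that $\sigma=\{x_1=\dots=x_4=0\}$. Since every section of $\cO(2\xi-h)$ vanishes on $\sigma$, the equation of $\cQ$ has the form $z\,a(x)+q(t,x)$ with $a$ linear in $x$ (constant coefficients) and $q$ quadratic in $x$ with coefficients linear in $t=[t_0:t_1]$, while $\cQ'$ pulls back a quadric $c\,z^2+z\,b(t,x)+c'(t,x)$ from $\bP^8$. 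For $p=[y]$ the curve $\cL(p)\simeq\bF_1$ is the image of $[s:v]\mapsto[s:y_1v:\dots:y_4v]$; substituting, $\cQ|_{\cL(p)}$ factors as $v\bigl(a(y)s+\Lambda(t,y)v\bigr)$ — the $(-1)$-curve $E=\{v=0\}$ plus a section $\widetilde\ell$ — and $\cQ'|_{\cL(p)}$ is a conic $\widetilde C$ disjoint from $E$ because $c\ne0$, so $\phi^{-1}(p)=\widetilde\ell\cap\widetilde C$. Parametrising $\widetilde\ell$ by $t$ (via $s=-\Lambda(t,y)v/a(y)$, where $a(y)\ne0$) and restricting $\widetilde C$ realises $\phi^{-1}(p)$ as the zero scheme of a binary quadratic form $\Delta(t,y)$ in $t$; hence $\deg\phi=2$, and on $\{a\ne0\}$ the branch sextic $B$ is cut out by $\mathrm{disc}_t\,\Delta$. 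Now, after completing the square in $\Lambda$, the symmetric $2\times2$ matrix of $\Delta$ in $t$ has the form $ww^{\top}+a(y)^2N$ with $w$ a length-$2$ vector and $N$ a symmetric $2\times2$ matrix of quadrics in $y$ built from $\Lambda$, $b$, $c'$; the matrix determinant lemma gives $\mathrm{disc}_t\,\Delta=-4\det(ww^{\top}+a^2N)=-4\,a(y)^2\bigl(a^2\det N+w^{\top}\mathrm{adj}(N)w\bigr)$. The factor $a(y)^2$ is extraneous — over a general point of $\{a=0\}$ the section $\widetilde\ell$ degenerates to $E$ plus a fibre of $\bF_1\to\bP^1$, so $\phi^{-1}(p)$ is reduced there and $p\notin B$ — so $B=\{a^2\det N+w^{\top}\mathrm{adj}(N)w=0\}$, and the identity
$$\det\begin{pmatrix}\alpha & v^{\top}\\ v & N\end{pmatrix}=\alpha\det N-v^{\top}\mathrm{adj}(N)v,$$
applied with $\alpha$ proportional to $a^2$ and $v$ proportional to $w$, rewrites this sextic as $\det M$ for a matrix $M$ of the stated shape, with $L$ a scalar multiple of $a$, the $Q_i$ the coordinates of $w$, and the $Q'_{jk}$ the entries of $N$; these last five forms are quadratic and the $(1,1)$-entry is the square $L^2$.

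The second bullet follows because the construction above is a dominant morphism from the irreducible parameter space of pairs $(\cQ,\cQ')$ to the space of matrices of the given shape: given $M$, choose the linear forms in $b$ arbitrarily, then recover $a$ from $L$, the quadrics in $\Lambda$ from the $Q_i$, and the quadrics in $c'$ from the $Q'_{jk}$. Since a generic height $22$ quartic del Pezzo fibration is already of the form $\cQ\cap\cQ'$ and the non-degeneracy conditions are open, a generic $M$ comes from a non-degenerate such fibration.

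For the third bullet, each partial derivative of $\det M$ is a combination of the $2\times2$ minors of $M$, so $\{\mathrm{rank}\,M\le1\}\subseteq\mathrm{Sing}(B)$; away from $\{L=0\}$ this locus is the common zero scheme of the three quartics $L^2Q'_{00}-Q_0^2$, $L^2Q'_{01}-Q_0Q_1$, $L^2Q'_{11}-Q_1^2$, whereas over $\{L=0\}$ these quartics cut out $\{L=Q_0=Q_1=0\}$, a scheme of length $1\cdot2\cdot2=4$; these four points lie in $\mathrm{Sing}(B)$ precisely because the $(1,1)$-entry is a square, so that $\partial(L^2)=2L\,\partial L$ vanishes along $\{L=0\}$ while every cofactor of $M$ other than the $(1,1)$-cofactor already vanishes there. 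The three quartics form a complete intersection of length $4^3=64$, and at each of the four points of $\{L=Q_0=Q_1=0\}$ their tangent cones are three conics in $\bP^2$ with no common point, so the local intersection multiplicity there is $2^3=8$; hence $64-4\cdot8=32$ simple points remain, the singularities of the first type. A local computation shows that all $32+4$ points are ordinary double points, and since $32+4=36$ agrees with the number of nodes predicted by $h^1(\Omega^2_\cX)=17$, for generic $\cX$ these exhaust $\mathrm{Sing}(B)$.

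The hardest step is the first paragraph: setting up the fibrewise equations correctly, completing the square, and recognising that the discriminant carries the extraneous factor $a(y)^2$. After that, the dominance statement and the B\'ezout bookkeeping at the four special points — together with the local ``ordinary double point'' verification — are routine.
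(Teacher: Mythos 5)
Your argument is correct and, at its core, is the same computation as the paper's: you eliminate the fibre coordinate $z$ (fibrewise over $p\in\bP^3$, via the two residual curves in $\cL(p)\simeq\bF_1$, rather than globally as in the paper), obtain a binary quadratic form in the base coordinates, take its discriminant, strip the extraneous factor $L^2$, and identify the result with $\det M$ via the bordered-determinant identity; the second bullet is the same ``reverse the algebra'' dominance argument, and the singular locus is analyzed as rank-one points plus the four points of $\{L=Q_0=Q_1=0\}$, exactly as in the paper. The one genuinely different sub-step is the count of $32$: the paper views the rank-one locus as the intersection in $\bP^5$ of the Veronese surface (degree $4$) with the degree-$8$ image of $\bP^3$ under the quadrics, giving $4\cdot 8=32$ by B\'ezout, whereas you intersect the three bordered minors in $\bP^3$ (a complete intersection of three quartics, length $64$) and subtract the excess multiplicity $8$ at each of the four points of $\{L=Q_0=Q_1=0\}$, where the tangent cones are three conics with no common point since $\det(Q'_{ij})\neq 0$ there; both counts are valid and consistent. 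Your write-up also makes explicit two points the paper leaves implicit -- the geometric reason the factor $L^2$ is extraneous (over a general point of $\{L=0\}$ the residual curve breaks as the $(-1)$-curve plus a fibre, so the fibre of $\phi$ is reduced) and the cofactor computation showing the four points are genuinely singular because the $(1,1)$-entry is a square -- which is a welcome gain in precision at no real cost in length.
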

\begin{proof}
Let $x_0$ and $x_1$ denote homogeneous coordinates on $\bP^1$
and their pullbacks to $\bP(V^*)$. Designate generating
global sections
$$y_0,y_1,y_2,y_3 \in \Gamma(\cO_{\bP(V^*)}(\xi-h))\simeq 
\Gamma(\cO_{\bP^3}(1))$$
and 
$$z,x_0y_0,x_1y_0,\ldots,x_0y_3,x_1y_3 \in \Gamma(\cO_{\bP(V^*)}(\xi)).$$
After completing the square to eliminate the term
linear in $z$, the defining equation $\cQ'$ may be written in the
form
$$
z^2=Q'_{00}x_0^2+2Q'_{01}x_0x_1+Q'_{11}x_1^2,
$$
where the $Q'_{ij}$ are quadratic in the $y_i$.
The defining equation for $\cQ$ takes the form
$$
-zL(y_0,y_1,y_2,y_3)+Q_0x_0+Q_1x_1=0,
$$
where $L$ is linear and $Q_0$ and $Q_1$ are quadratic in the $y_i$.
Eliminating $z$ we obtain
$$x_0^2(Q'_{00}L^2-Q_0^2)+2x_0x_1(Q'_{01}L^2-Q_0Q_1)+
x_1^2(Q'_{11}L^2-Q_1^2)=0,$$
which is the defining equation for the image of $\cX$ in
$\bP^1 \times \bP^3$. The discriminant of this polynomial--regarded
as a binary quadratic form in $x_0$ and $x_1$--can be written as
$$L^4((Q'_{01})^2-Q'_{00}Q'_{11})+L^2(-2Q'_{01}Q_0Q_1+Q'_{00}Q_1^2
+Q'_{11}Q_0^2).$$
After dividing out by $-L^2$ we obtain $\det(M)$.
This proves the first assertion. Reversing the algebra gives
the second assertion.

We analyze the singularities of the hypersurface $\det(M)=0$. 
In general, the singularities of the determinant of
a symmetric $3\times 3$ 
matrix of forms is given by the vanishing
of the $2\times 2$ minors. In geometric terms, this
is the Veronese surface $\operatorname{Ver} \hookrightarrow \bP^5$ 
which has degree four. If the entries are quadratic forms in 
$y_0,\ldots,y_3$
then the image of the associated morphism $\bP^3 \rightarrow \bP^5$
has degree eight. Bezout's Theorem gives $32$ points of 
intersection. 

However, we also have to take into account singularities of the 
entries. Given the form of the upper-left entry
of $M$, these occur precisely when $L=0$. (The other
entries are generic.) The determinantal hypersurface thus
has additional singularities along the locus $L=Q_0=Q_1=0$.
Our genericity assumption implies this is a complete intersection,
thus we obtain four additional ordinary double points.
\end{proof}
 
We have the following corollary:
\begin{coro} \label{coro:sexticheight22}
The generic sextic double solid arises as a deformation of a 
nodal birational
model of a generic height $22$ fibration $\cX \ra \bP^1$ in 
quartic del Pezzo surfaces.
\end{coro}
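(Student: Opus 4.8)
The plan is to combine the explicit matrix description of the branch sextic $B$ from the Proposition with a dimension count, realizing the generic sextic double solid $\cV$ as a limit of the (nodal) sextic double solids $B\subset\bP^3$ attached to height $22$ del Pezzo fibrations. First I would recall that, by the Proposition, a generic height $22$ fibration $\cX\ra\bP^1$ produces a double cover $\phi:\cX\ra\bP^3$ branched along a sextic $B$ with $32+4=36$ ordinary double points, and that the blowup $\wcV\ra\cV=$(double solid along $B$) has the right Hodge number ($h^1(\Omega^2_{\wcV})=52$ with $r=2$, $n=36$) to be a smooth model of $\cX$. So the nodal birational model in the statement is exactly the sextic double solid associated to the determinantal $B$.

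Next I would set up the deformation. Inside the $\binom{6+3}{3}=84$-dimensional projective space $\bP(\Gamma(\cO_{\bP^3}(6)))$ of sextic surfaces, the determinantal sextics $\det(M)=0$ with $M$ of the displayed special shape form a (constructible, irreducible) subvariety $\cN$; by the second bullet of the Proposition these are precisely the $B$'s arising from height $22$ fibrations. I would argue that a generic sextic $B_0$ lies in the closure $\overline{\cN}$: concretely, one shows that one can degenerate $B_0$ to a member of $\cN$ while keeping the total space (the double solid) inside a flat family over a curve or a disk, so that $\cV_0$ (the smooth sextic double solid over the general point) specializes to the nodal determinantal double solid $\cV$ over the special point. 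The cleanest formulation is the converse: start from the nodal $\cV$ attached to generic $\cX$ and smooth it. A nodal sextic double solid with only ordinary double points that impose independent conditions is smoothable, and a general smoothing is a general (smooth) sextic double solid; since the $36$ nodes of our $B$ are ODPs and — by the genericity hypothesis already invoked in the Proposition — impose independent conditions on sextics, the generic fiber of such a smoothing family is a general smooth sextic double solid. Hence $\cV$ deforms to the generic sextic double solid, which is the content of the Corollary.

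The main obstacle is the independence-of-conditions / smoothing step: one must check that the $36$ ordinary double points of the determinantal sextic $B$ impose independent conditions on the linear system $|\cO_{\bP^3}(6)|$, equivalently that the nodal double solid $\cV$ has unobstructed deformations smoothing all nodes simultaneously, and that a general such smoothing has Picard rank one (not the rank-two $\cV$), i.e., that the exceptional divisor class genuinely disappears. For ODPs on a threefold this is governed by a defect computation; here the Hodge-number bookkeeping already done in the text ($52 = n - r + 1 + h^1(\Omega^2_{\wcV})$ with $n=36$, $r=2$, $h^1(\Omega^2_{\wcV})=17$) is exactly the defect statement that forces the nodes to impose $36$ conditions with a two-dimensional defect accounted for by $r$, so a general deformation smooths the nodes and drops to Picard rank one, giving a smooth sextic double solid of the expected (generic) Hodge number $52$. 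Since the moduli of sextic double solids is irreducible, this general smoothing is the generic one, completing the proof.
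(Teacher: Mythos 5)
Your first two paragraphs are essentially the paper's (implicit) argument: the corollary is stated without proof because it follows at once from the Proposition together with the fact that the branch surfaces of sextic double solids vary in the single irreducible projective space $|\cO_{\bP^3}(6)|$ (equivalently, sextic hypersurfaces in $\bP(1,1,1,1,3)$). Given the Proposition, the generic height $22$ fibration has a birational model $\cV$ that is the double cover of $\bP^3$ branched in the $36$-nodal determinantal sextic $B$; choosing any curve in $|\cO_{\bP^3}(6)|$ through $[B]$ and a very general point, and taking the corresponding flat family of double covers, one gets a family with special fiber $\cV$ and generic fiber the generic smooth sextic double solid. That is the whole proof. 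In particular your ``main obstacle'' is not an obstacle: there is no need for $\cV$ to be smoothable in any abstract deformation-theoretic sense, because the smoothing is performed by moving the branch surface inside its (irreducible, base-point-free) linear system, where the generic member is smooth; and since every smooth sextic double solid is a Fano threefold of Picard rank one, there is also nothing to verify about the exceptional class ``disappearing'' in the general fiber.

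Moreover, the justification you offer for that step does not hold up as written. You assert that the $36$ nodes impose independent conditions on sextics and simultaneously invoke the bookkeeping $52=n-r+1+h^1(\Omega^2_{\wcV})$ with $r=2$ as ``the defect statement'': but a class group of rank $2$ on the nodal double solid records precisely a \emph{nonzero} defect, i.e., the nodes do \emph{not} impose independent conditions on the relevant linear system --- that failure is exactly what the extra divisor class (coming from the del Pezzo fibration structure) encodes. So the criterion ``ODPs imposing independent conditions $\Rightarrow$ smoothable, with general smoothing general in moduli'' is both inapplicable here and unnecessary. Dropping the third paragraph and running the specialization inside $|\cO_{\bP^3}(6)|$, as in your own first formulation, gives a correct and complete proof matching the paper's intent.
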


\section{Index one Fano threefolds}
Let $V$ be a smooth Fano threefold with $\Pic(V)=\bZ K_V$, i.e.,
with rank one and index one. Its degree $d(V)=-K_V^3$
takes the following values \cite{isk-prokhorov}:
$$
d(V)=2, 4, 6, 8, 10, 12, 14, 16, 18, 22.$$
For each $d(V)$ there is an irreducible parameter space for
the corresponding Fano threefolds.
The cases $d(V)=12, 16, 18,$ and $22$ are rational. 

When $d(V)=14$ the generic $X\subset \bP^9$ arises as a linear section 
of the Grassmannian $\Gr(2,6)$. Projective duality gives
a codimension ten section of the Pfaffian cubic hypersurface 
in $\bP^{14}$,
a cubic threefold $V'$. There is a birational map
$V \dashrightarrow V'$; see \cite[\S1]{IlMark}, for example, for
additional details. 
This example is related to quartic del Pezzo fibrations: 
One of the two species of quartic del Pezzo fibrations of height
ten $\cX \ra \bP^1$ admits a natural morphism \cite[\S 11]{HKT}
$$\cX \ra V \subset \bP^9;$$
the image is a nodal Fano threefold of degree $14$.
However, stable rationality of cubic threefolds (and birationally
equivalent varieties) remains an open problem.

\subsection{$d(V)=2$: Sextic double solids}

Failure of stable rationality in this case has been established 
by Beauville \cite{beau-6} and by Colliot-Th\'el\`ene--Pirutka
\cite{ct-pir-cyclic}. It also follows naturally
from our formalism:
Apply Corollary~\ref{coro:sexticheight22} to reduce to the corresponding
del Pezzo fibration of height $22$.
This realizes a generic height $22$ fibration in quartic del
Pezzo surfaces as a nodal sextic double solid. If a smooth threefold
admits an integral decomposition of the diagonal the same holds
true for a specialization with nodes \cite[Th.~1.1]{Voisin}.
Since a very general del Pezzo fibration of height $22$
lacks such a decomposition (by Theorem~\ref{theo:dPnotSR}),
the same holds for a very general sextic double solid.

\subsection{$d(V)=4$: Quartic threefolds}

Failure of stable rationality in this case has been established
by Colliot-Th\'el\`ene and Pirutka \cite{ct-pirutka}. 
To see this from the perspective of fibrations in 
quartic del Pezzo surfaces, it suffices to recall
that a generic such fibration of height $20$ admits a 
birational model as a determinantal quartic threefold with
sixteen nodes \cite[\S11 ]{HTCEJM} (cf.~\cite[Th.~11]{Cheltsov-NNQT}). 
This lacks an 
integral decomposition of the diagonal (Theorem~\ref{theo:dPnotSR}),
so very general quartic threefolds have the same property,
hence fail to be stably rational. 

\subsection{$d(V)=6$: Complete intersections of a quadric
and a cubic in $\bP^5$}
We proceed as before, using the fact that a generic quartic
del Pezzo fibration of height $18$ admits a birational model
as a complete intersection $\cY \subset \bP^5$ with eight nodes.

Indeed, realize 
$$\cX \subset \bP(\cO_{\bP^1}^{\oplus 4} \oplus \cO_{\bP^1}(-1))
\subset \bP^1 \times \bP^5$$
as a complete intersection of forms of bidegree $(1,1)$,
$(0,2)$, and $(1,2)$, as in Case 3 of \cite[\S4]{HTCEJM}.
(Here we are using the irreducibility of the moduli space
of quartic del Pezzo fibrations of height $18$.)
Let $\cY \subset \bP^5$ denote the image of projection
onto the second factor. 
Consider first the image in $\bP^5_{[x_0,\ldots,x_5]}$ of the locus cut out
by the forms of bidegree $(1,1)$ and
$(1,2)$:
$$s L_0+tL_1=sQ_0+tQ_1=0,
$$
with 
$$L_0,L_1 \in \bC[x_0,\ldots,x_5]_1, \quad 
Q_0,Q_1 \in \bC[x_0,\ldots,x_5]_2.
$$
Its equation is obtained by eliminating $s$ and $t$, which yields
$$L_1Q_0-L_0Q_1=\det \left(\begin{matrix} L_0 & L_1 \\ Q_0 & Q_1 \end{matrix}
\right) =0.$$
This is a cubic fourfold $\cW$ singular along the elliptic quartic curve
$$C=\{L_0=L_1=Q_0=Q_1=0\}.$$
Let $Q \in \bC[x_0,\ldots,x_5]_2$ be the form of bidegree $(0,2)$,
so that
$$\cY=\cW \cap \{Q=0\}.$$
This is a complete intersection of $Q$ with $\cW$, having eight nodes
at the intersection $C\cap \{Q=0\}=\{p_1,\ldots,p_8\}.$
The preimages of these nodes in $\cX$ are distinguished sections
of $\cX \ra \bP^1$.

We establish failure of stable rationality for very general
complete intersections as in the previous cases: Use Theorem~\ref{theo:dPnotSR}
to deduce the failure of integral decomposition of the diagonal
for very general quartic del Pezzo fibrations of height eighteen.
It follows that very general complete intersections $V\subset \bP^5$
of a quadric and a cubic also lack such decompositions, so stable
rationality fails.

\subsection{$d(V)=8$: Complete intersections of three quadrics in $\bP^6$}
Let $V\subset \bP^3$ denote a complete intersection of three
quadrics. Beauville \cite[\S6.4,\S6.23]{Beau77} has shown
that $V$ is birational
to a conic fibration
$$X\ra \bP^2,$$
with discriminant $D\subset \bP^2$ of degree seven, and a generic plane
curve of degree seven arises in this way. Thus the results in \S\ref{sect:HKT}
apply: a very general such $V$ fails to be stably rational.

For unity of exposition, we offer a second proof 
using the fact that a generic quartic
del Pezzo fibration of height $16$ admits a birational model
as a complete intersection $\cY \subset \bP^6$ with four nodes.

Express  
$$\cX \subset \bP(\cO_{\bP^1}^{\oplus 3} \oplus \cO_{\bP^1}(-1)^{\oplus 2})
\subset \bP^1 \times \bP^6$$
as a complete intersection of two forms of bidegree $(1,1)$,
and two quadratic forms from $\bP^6_{[x_0,\ldots,x_6]}$ (see 
Case 3 of \cite[\S 4]{HTCEJM}).
Again, $\cY$ is the projection of $\cX$ onto the second factor.
Write the forms of bidegree $(1,1)$ as
$$sL_0+tL_1=sM_0+tM_1=0, \quad L_0,L_1,M_0,M_1 \in \bC[x_0,\ldots,x_6]_1.$$
Eliminating $s$ and $t$ gives a quadratic equation
$$L_1M_0-L_0M_1=\det \left(\begin{matrix} L_0 & L_1 \\ M_0 & M_1 \end{matrix}
	\right)=0.$$
Let $\cW \subset \bP^6$ denote the resulting quadric hypersurface;
it is singular along the plane
$$P=\{L_0=L_1=M_0=M_1=0\} \subset \bP^6.$$
Then $\cY$ arises as the intersection of $\cW$ with the zeros of
two arbitrary $Q_0,Q_1 \in \bC[x_0,\ldots,x_6]_2$. 
It is a complete intersection of three quadrics with singular
locus
$$P \cap \{Q_0=Q_1=0\}=\{p_1,\ldots,p_4\}.$$
The preimage of the singularities are distinguished sections 
of $\cX \ra \bP^1$.  

The argument for failure of stable rational for a very general
complete intersection of three quadrics proceeds as before.

Alternatively, we may observe that such an intersection is birational 
to a conic bundle over $\bP^2$, ramified in a curve $D$ of degree 7, 
and apply results in $\S \ref{sect:HKT}$ directly.

\subsection{$d(V)=10$: Complete intersections in $\Gr(2,5)$}
Fano threefolds $V$ of this type are 
obtained by intersecting the Grassmannian $\Gr(2,5) \subset \bP^9$
with two linear forms and one quadratic form.

In \cite[\S11]{HTCEJM} we showed that generic quartic del Pezzo
fibrations of height fourteen are birational to $\cY \subset \bP^7$,
where $\cY$ is a specialization of $V$ with two nodes.
Repeating the arguments above, we conclude that the very general
such $V$ fails to be stably rational.

\section{Fano threefolds of index two}
In this section we consider Fano threefolds $V$ with
$\Pic(V)=\bZ \frac{K_V}{2}$, i.e., those of
rank one and index two. Here the degree
$d(V)=-K_V^3=8\cdot \delta(V)$ where $\delta(V)\in \bN$.
The possible values are
$\delta(V)=1,2,3,4,5$; if $\delta(V)=4$ or $5$ then $V$
is rational, and when $\delta(V)=3$ then $V$ is a cubic threefold.

\subsection{$\delta(V)=1$: Double cover of Veronese cone}
\label{subsect:DCVC}
Let 
$$\bP:=\bP(1,1,1,2) \subset \bP^6$$ 
denote the cone over 
the Veronese surface $\operatorname{Ver}\subset \bP^5$;
the vertex $p=[0,0,0,1]$ is a terminal singularity of $\bP$ of index $2$. 
Let $B \subset \bP$  denote the restriction of a generic cubic hypersurface in $\bP^6$,
which has degree six in the natural grading on $\bP$. Consider the double cover
$$\phi: V \ra \bP$$
branched over $B$. It is also ramified over $p$; its preimage $v_0 \in V$
is smooth.  We may regard $V$ as a hypersurface in $\bP(1,1,1,2,3)$ of
degree six, which is clearly Fano of index two.
Note that $h^1(\Omega^2_V)=21$ \cite[\S 12]{isk-prokhorov}
and that $V$ depends on $34$ parameters.

We specialize $B$ so it contains $v_0$, analyzing the resulting double cover
$\phi:V\ra \bP$. (This imposes one condition so the construction depends
on $33$ parameters.)
Blowing up $p$ gives a resolution
$$\beta:\bP(\cO_{\bP^2} \oplus \cO_{\bP^2}(-2))\simeq \Bl_p(\bP) \ra \bP.$$
Let $\xi$ and $h$ generate the Picard group of the projective bundle,
where $h$ is the pullback of the hyperplane class on $\bP^2$ and 
$\xi=c_1(\cO_{\bP(\cO\oplus \cO(-2))}).$ Let 
$$E\simeq \bP^2 \subset \bP(\cO_{\bP^2}\oplus \cO_{\bP^2}(-2))$$
denote the exceptional divisor; note that $[E]=\xi-2h$. Let $\tilde{B}$
denote the proper transform of $B$ with $[\tilde{B}]=2\xi+2h$,
$\tilde{V}\ra \bP(\cO_{\bP^1} \oplus \cO_{\bP^1}(-2))$
the double cover branched along $\tilde{B}$, and
$\tilde{E}\subset \tilde{V}$ the preimage of $E$. 
Note that $\tilde{E}\simeq \bP^1\times \bP^1$ as $\tilde{B}$
meets $E\simeq \bP^2$ in a plane conic $C$. Moreover, applying
the Hurwitz formula and adjunction yields
$$N_{\tilde{E}/\tilde{V}} \simeq \cO_{\bP^1 \times \bP^1}(-2,-2).$$
The induced birational morphism $\tilde{V} \ra V$ resolves $v_0$
with exceptional divisor $\tilde{E}\simeq \bP^1 \times \bP^1$.
In particular, $\tilde{V} \ra V$ is universally $\CH_0$-trivial
(see \cite[Prop.~1.8]{ct-pirutka}).
For the equivalence between universal $\CH_0$-triviality and the existence
of integral decompositions of the diagonal, see \cite[Lemma~1.3]{ACTP} and 
\cite[\S1]{Voisin}.

We compute the invariants of $\tilde{V}$: The bundle structure
$$\varpi: \bP(\cO_{\bP^2} \oplus \cO_{\bP^2}(-2)) \ra \bP^2$$
induces a morphism
$$\psi:\tilde{V} \ra \bP^2.$$
Since $\tilde{B}$ is a bisection of $\varpi$, $\psi$ endows
$V$ with the structure of a conic bundle. Let $D\subset \bP^2$ denote
its discriminant curve, which coincides with the branch locus
of $\varpi:\tilde{B} \ra \bP^2$. An adjunction computation
implies $K_{\tilde{B}}=h|_{\tilde{B}}$ so $D$ is a plane octic curve,
generically smooth. 

Now $D$ and $C$ are tangent at each point
of their intersection, i.e., 
$$D\cap C = 2(z_1+\ldots+z_8)=2Z$$
with $\cI_Z\subset \cO_D$ the ideal sheaf.
Thus $D$ depends on $44-8-3=33$ parameters; moreover, the parameter
space of smooth
octic plane curves eight-tangent to $C$ is
birational to a projective bundle over $C^{[8]}$, thus irreducible.
Furthermore, $\eta:=\cO_D(1)\otimes \cI_Z$ 
is a two-torsion element of the Jacobian of $D$.
The double cover
$D' \ra D$
associated with $\tilde{V} \ra \bP^2$ is classified
by $\eta$. From it, we read off the cohomology
of $\tilde{V}$:
$$\IJ(\tilde{V})=\Prym(D' \ra D).$$
The curve $D$ has genus $21$ so $h^1(\Omega^2_{\tilde{V}})=20$.
Thus the singularity $v_0$ reduces this Hodge number by one.

\begin{lemm}
There exists a specialization
$$D \rightsquigarrow D_1 \cup D_2$$
of octic curves eight-tangent to $C$, such that $D_1$ and $D_2$
are transverse plane quartics each four-tangent to $C$. 
This satisfies the requirements of \S\ref{sect:HKT}.
\end{lemm}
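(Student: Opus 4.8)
The plan is to construct the degeneration explicitly inside the linear system of octic plane curves eight-tangent to the fixed conic $C$, and then verify that the resulting reducible curve $D_1\cup D_2$ carries a nontrivial \'etale double cover on each component — exactly the ``distinguished point'' hypothesis of \S\ref{sect:HKT}. Since a smooth plane quartic has genus $3$, a transverse union $D_1\cup D_2$ of two plane quartics has arithmetic genus $3+3+(4\cdot 4-1)=3+3+15=21$, matching the genus of a smooth octic; so such a union is indeed a degeneration of $D$ inside $|\cO_{\bP^2}(8)|$. The constraint to be respected is the eight-tangency to $C$: I would split the eight tangency points $z_1,\dots,z_8$ as $z_1,\dots,z_4$ lying on $D_1$ and $z_5,\dots,z_8$ lying on $D_2$, asking each $D_i$ to be a smooth plane quartic tangent to $C$ at its four assigned points. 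A plane quartic four-tangent to a fixed conic exists and moves in a positive-dimensional family (imposing four tangency conditions on the $14$-dimensional $|\cO_{\bP^2}(4)|$ leaves at least a $6$-dimensional family, and generically the quartic is smooth and the four tangencies are honest simple tangencies); one also needs $D_1$ and $D_2$ to meet transversally away from $C$, which is an open dense condition. This produces the curve.

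Next I would check that the torsion line bundle degenerates compatibly and restricts nontrivially to each component. On the smooth octic the distinguished two-torsion class is $\eta=\cO_D(1)\otimes\cI_Z$ with $Z=z_1+\dots+z_8$, because $2Z = D\cap C \in |\cO_D(2)|$. Under the specialization $D\rightsquigarrow D_1\cup D_2$, the class $\cO(1)\otimes\cI_Z$ restricts on $D_i$ to $\cO_{D_i}(1)\otimes\cI_{Z_i}$ where $Z_i$ is the length-four tangency divisor on $D_i$; since $2Z_i = D_i\cap C\in |\cO_{D_i}(2)|$, this is again a two-torsion class on $D_i$. The key point is that this class is \emph{nonzero} on each $D_i$: if $\cO_{D_i}(1)\otimes\cI_{Z_i}$ were trivial then $\cO_{D_i}(1)\cong\cO_{D_i}(Z_i)$, i.e.\ the hyperplane section of the quartic $D_i$ would be linearly equivalent to the degree-four divisor $Z_i$ supported on $C\cap D_i$; this fails for a general four-tangent quartic (the four tangency points are not the intersection of $D_i$ with a line), which I would confirm by a dimension count or by exhibiting one example. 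Hence the associated \'etale double cover $D'_i\to D_i$ is nontrivial for $i=1,2$, and by the general deformation argument recalled at the end of the proof of Theorem~\ref{theo:dPnotSR} (\'etale double covers of $D_1\cup D_2$ deform to the parameter space via the vanishing-cycle collapse $H_1(D,\bZ)\to H_1(D_1\cup D_2,\bZ)$), this double cover lies in the relevant component $\cM$.

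Finally I would assemble the verification of the \S\ref{sect:HKT} hypotheses: the parameter space of smooth octics eight-tangent to $C$ is irreducible (stated above, being birational to a projective bundle over $C^{[8]}$), the monodromy on $H^1(D,\bZ/2\bZ)$ acts through the full symplectic group hence transitively on nonzero classes, so there is a unique relevant component $\cM$ and it contains our distinguished point; the nodes of $D_1\cup D_2$ are the $16$ transverse intersection points, which we arranged to be disjoint from $C$ and hence we may take them disjoint from the base locus of the sub-system; and $D_1$, $D_2$ are smooth of genus $3\ge 1$. Thus all requirements are met. The main obstacle I expect is the nonvanishing claim $\cO_{D_i}(1)\otimes\cI_{Z_i}\not\cong\cO_{D_i}$ together with ensuring that a four-tangent quartic can be chosen smooth with genuine simple tangencies and with $D_1,D_2$ meeting transversally off $C$ — i.e.\ that the obvious incidence variety of (quartic, four tangency points on $C$) is nonempty, of the expected dimension, and generically smooth; once a single explicit smooth four-tangent quartic with the nonvanishing property is exhibited, the rest follows by openness.
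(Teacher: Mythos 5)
Your construction is essentially the paper's: split the eight tangency points into two fours, take $D_1,D_2$ smooth quartics four-tangent to $C$ at the assigned points and meeting transversally, and check that the distinguished two-torsion class specializes to a class $\eta_0=\cI_Z(1)$ that is nontrivial on each component. The one step you flag as the ``main obstacle'' does not require any genericity, dimension count, or explicit example: for a smooth plane quartic $D_i$ the restriction map $H^0(\bP^2,\cO(1))\to H^0(D_i,\cO_{D_i}(1))$ is an isomorphism, so if $\cO_{D_i}(1)\otimes\cI_{Z_i}$ were trivial the degree-four divisor $Z_i$ would be cut out by a line; but $Z_i$ consists of four distinct points lying on the conic $C$, and a line meets $C$ in at most two points. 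This is exactly the paper's one-line argument (``otherwise, these four-tuples of points would be collinear''), and it holds for \emph{every} four-tangent quartic with distinct tangency points, not just a general one.

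Two of your auxiliary justifications should be dropped or replaced. The genus count $3+3+15=21$ is beside the point: any octic is a flat limit of smooth octics; what matters is that $D_1\cup D_2$ is a limit of smooth octics \emph{eight-tangent to $C$ at the same points}, so that $\eta\rightsquigarrow\eta_0$. This follows from the structure you already cite (the eight-tangent family is a projective bundle over $C^{[8]}$, hence irreducible with the smooth locus dense; the paper phrases it as ``linear algebra shows that we can smooth $D_1\cup D_2$ to a smooth plane octic tangent to $C$ at $z_1,\ldots,z_8$''). Likewise, the claim that monodromy acts through the full symplectic group on $H^1(D,\bZ/2\bZ)$ is neither justified nor needed here: $D$ is \emph{not} general in $|\cO_{\bP^2}(8)|$ (it is constrained to be eight-tangent to $C$ and carries the distinguished class $\eta$), and the paper's setup in \S\ref{sect:HKT} was deliberately strengthened so that $\cM$ need only be an irreducible parameter space containing the distinguished reducible member -- irreducibility of the bundle over $C^{[8]}$, together with the specialization $\eta\rightsquigarrow\eta_0$, is what identifies the component, with no transitivity statement required. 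With these adjustments your argument coincides with the paper's proof.
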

\begin{proof}
Consider the space of pairs $(D_1,D_2)$ where $D_1$ and $D_2$ are
plane quartics four-tangent to $C$, with $D_1$ and $D_2$ 
meeting transversally. Repeating the argument above, the plane quartics
four-tangent to $C$ are birational to a $\bP^6$ bundle over $C^{[4]}$,
an irreducible rational variety of dimension ten. It is easy to check 
that a generic pair of such curves meets transversally, yielding a
rational parameter space of dimension twenty. Write
$$D_1\cap C= 2(z_1+z_2+z_3+z_4) \quad
D_2\cap C= 2(z_5+z_6+z_7+z_8)$$
and $\cI_Z\subset \cO_{D_1\cup D_2}$ for the ideal sheaf of 
$Z=\{z_1,\ldots,z_8\}$. Thus $\eta_0:=\cI_Z(1)$ is two-torsion
in the Picard group of $D_1\cup D_2$ and restricts to non-trivial
two-torsion elements on $D_1$ and $D_2$ because
$$z_1+z_2+z_3+z_4\not \equiv [\cO_{D_1}(1)], \quad
z_5+z_6+z_7+z_8\not \equiv [\cO_{D_2}(1)].$$
Otherwise, these four-tuples of points would be collinear.

Linear algebra shows that we can smooth $D_1\cup D_2$ to a 
smooth plane octic $D$ tangent to $C$ at $z_1,\ldots,z_8$.
As we saw in the proof of Theorem~\ref{theo:dPnotSR}, this gives rise to
a cover $D'\ra D$ classified by the divisor $\eta$.
As $D\rightsquigarrow D_1\cup D_2$ we have $\eta \rightsquigarrow \eta_0$.
\end{proof}

Thus the results of \S\ref{sect:HKT} imply that $\tilde{V}$ 
fails to admit an integral decomposition of the diagonal. 
An application of the results of \cite[\S1]{ct-pirutka} implies
that a very general $V\subset \bP(1,1,1,2,3)$ also
fails to admit an integral decomposition of the diagonal, and thus
is not stably rational.

\subsection{$\delta(V)=2$: Quartic double solids}
\label{sect:QDS}
Let $V$ be a quartic double solid
$$\phi:V \ra \bP^3$$
with branch locus a degree four K3 surface $B$.
When $V$ is smooth we have $h^1(\Omega^2_V)=10$.
Voisin \cite{Voisin} and Colliot-Th\'el\`ene--Pirutka \cite{ct-pir-cyclic}
established the failure of stable rationality for very general
varieties in this class.
Here we discuss how to approach this through conic bundle fibrations.

Now suppose $V$ (or equivalently $B$) has a node $p$
and write $\tilde{V}=\Bl_p(V)$. Projection from $p$
gives a conic bundle structure
$$\pi:\tilde{V} \ra \bP^2$$
branched along a sextic plane curve $D$.
The plane curve is typically smooth but admits a six-tangent
conic curve $C$ corresponding to the exceptional divisor of the
induced resolution of $B$. Write
$$D\cap C = 2Z, \quad Z=z_1+\cdots+z_6 $$
so that $\eta:=\cO_D(1)\otimes \cI_Z$ is two-torsion on $D$.
Here $\cI_Z$ is the ideal sheaf of $Z$.

As we saw in the previous case, the parameter space of sextic
plane curves six-tangent to a prescribed conic is irreducible,
being a projective bundle over the Hilbert scheme $C^{[6]}$.
We can specialize 
$$
D \rightsquigarrow D_1 \cup D_2,
$$
where $D_1$ and $D_2$ are smooth plane cubics meeting transversally,
each three-tangent to $C$. Thus $\eta$ specializes to a two-torsion
divisor on $D_1\cup D_2$ that is non-trivial on each component.

An application of the results of \S\ref{sect:HKT} implies that
the very general quartic double solid fails to have an
integral decomposition of the diagonal, thus is not stably rational.

\section{Fano threefolds of higher Picard rank}
As before we write $d(V)=-K_V^3$.

\subsection{$d(V)=6,h^{1,2}(V)=20$}
The first case is double covers 
$$
V\ra \bP^1\times \bP^2
$$ 
branched 
over a divisor of bidegree $(2,4)$. These depend on 
$$
3\times 15-(1+3+8)=33
$$
parameters. Projection onto the second factor
gives a conic bundle structure $V\ra \bP^2$ with octic discriminant
curve $D\subset \bP^2$. The equation of $D$ is given by the vanishing
of the determinant of a $2\times 2$ {\em symmetric} matrix of 
quartic forms in three variables. In particular, $D$ is not general
in its linear series and each symmetric determinantal octic
comes with a distinguished non-trivial two-torsion class, i.e.,
the one associated with the ramification data of $V\ra \bP^2$.
This makes it hard to apply the methods of \S\ref{sect:HKT} directly.

However, there is a natural degeneration of such Fano threefolds
to another class of rationally connected varieties:
Fix distinct points $p,q \in \bP^2$ and consider 
divisors $B_0 \subset \bP^1 \times \bP^2$ of bidegree $(2,4)$
whose fibers over $\bP^1$ admit nodes at $p$ and $q$. (Equivalently,
these are singular along $\bP^1 \times \{p,q\}$.) Consider the 
birational map 
$$\bP^2 \dashrightarrow \bP^1 \times \bP^1$$
blowing up $p$ and $q$ and blowing down the line joining them. This
takes quartic plane curves singular at $p$ and $q$ to bidegree 
$(2,2)$ curves in $\bP^1 \times \bP^1$. Using the induced birational
map
$$\bP^1 \times \bP^2 \dashrightarrow \bP^1 \times \bP^1 \times \bP^1,$$
we see that $B_0$ is mapped to $(2,2,2)$ divisor in the image. Conversely,
$(2,2,2)$ divisors in the image all arise from this construction.

\begin{lemm}
Let $V_0$ denote the 
double cover of $\bP^1\times \bP^2$ branched over a very general
such $B_0$. Let $\widetilde{V_0} \ra V_0$ denote the blowup
along $\bP^1\times \{p,q\}$. If $\widetilde{V_0}$
admits no integral decomposition of the diagonal
then the same holds for the very general Fano variety
$V$ arising as a double cover of $\bP^1 \times \bP^2$
branched over a divisor of bidegree $(2,4)$.
\end{lemm}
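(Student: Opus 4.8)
The plan is to run the standard specialization argument for integral decompositions of the diagonal, as in the sextic double solid and quartic double solid cases above. The key point is that $V_0$ (or rather its resolution $\widetilde{V_0}$) occurs as a flat degeneration of the family of smooth double covers $V$ of $\bP^1\times\bP^2$ branched along bidegree $(2,4)$ divisors, and that the resolution map $\widetilde{V_0}\ra V_0$ is universally $\CH_0$-trivial. Given these two facts, \cite[Th.~1.1]{Voisin} (in the universally $\CH_0$-trivial form of \cite[\S1]{ct-pirutka}) yields: if $\widetilde{V_0}$ admits no integral decomposition of the diagonal, then neither does the very general fiber $V$ of the family, hence $V$ is not stably rational.

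First I would set up the family. Let $\cB\ra T$ be the universal bidegree $(2,4)$ divisor on $\bP^1\times\bP^2$ over the parameter space $T$, and let $\cV\ra T$ be the associated family of double covers; a general fiber is smooth, so $\cV$ is flat and the general fiber is $V$. Inside $T$ sits the locus $T_0$ parametrizing divisors $B_0$ singular along $\bP^1\times\{p,q\}$ (after also letting $p,q$ vary, or fixing them—either works, since the specialization argument only needs one good central fiber). Over a general point of $T_0$ the fiber is $V_0$, which has non-isolated singularities along two copies of $\bP^1$; the needed input is that $V_0$ admits a resolution $\widetilde{V_0}\ra V_0$ that is universally $\CH_0$-trivial. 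The resolution is the blowup along $\bP^1\times\{p,q\}$, exactly as in the statement; one checks, as in the $\delta(V)=1$ case of \S\ref{subsect:DCVC}, that the exceptional divisors over the two singular curves are $\bP^1$-bundles (or products $\bP^1\times\bP^1$) with fibers rational, so each fiber of $\widetilde{V_0}\ra V_0$ is a rational variety with trivial $\CH_0$; by \cite[Prop.~1.8]{ct-pirutka} the morphism is universally $\CH_0$-trivial. This is the one genuinely geometric verification, and it is the place I expect the real work to be: confirming that a very general such $B_0$ has fibers over $\bP^1$ with only nodes at $p,q$ (no worse singularities, and the total space $V_0$ has no singularities off $\bP^1\times\{p,q\}$), so that the blowup really is a resolution with the claimed exceptional structure.

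With the family $\cV\ra T$ flat, the central fiber $V_0$ admitting a universally $\CH_0$-trivial resolution $\widetilde{V_0}$, and the general fiber $V$ smooth, the specialization theorem applies verbatim: the locus of $t\in T$ for which $\cV_t$ admits an integral decomposition of the diagonal is a countable union of closed subsets, and if it contained a very general point it would (by specializing along a one-parameter family through $T_0$) force $\widetilde{V_0}$ to admit one too. So if $\widetilde{V_0}$ has no integral decomposition of the diagonal, the very general $V$ has none either, and therefore fails to be stably rational. The only subtlety to flag is that one must be sure $T_0$ lies in the closure of the locus of smooth fibers and that the chosen one-parameter degeneration stays inside $T$ with smooth general member; since arbitrary bidegree $(2,4)$ divisors specialize to those singular along $\bP^1\times\{p,q\}$, this is immediate.
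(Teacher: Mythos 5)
Your overall strategy coincides with the paper's: specialize the smooth bidegree $(2,4)$ double covers to $V_0$, show that the blowup $\widetilde{V_0}\ra V_0$ along the two lines is a resolution whose fibers are universally $\CH_0$-trivial, and invoke the specialization criterion of \cite[\S1]{ct-pirutka} (with \cite[Lemma~1.3]{ACTP} and \cite[\S1]{Voisin} translating between universal $\CH_0$-triviality and integral decompositions of the diagonal). The gap sits exactly at the step you flag as ``the real work'' and then leave undone --- and the statement you propose to verify there is in fact false. For a very general $B_0$ singular along $\bP^1\times\{p\}$ and $\bP^1\times\{q\}$, the transverse singularity along these lines is \emph{not} everywhere an ordinary node: writing the local equation of $B_0$ near $\bP^1\times\{p\}$ as $A(t)x^2+2B(t)xy+C(t)y^2+(\text{higher order in }x,y)$, the binary quadratic form degenerates at the finitely many zeros of $AC-B^2$, where $B_0$ acquires a cusp (analytically $x^2+y^3=0$) and $V_0$ has local equation $w^2=x^2+ty^2+y^3$; this is precisely the computation the paper carries out. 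Consequently the exceptional divisors of $\widetilde{V_0}=\Bl_{\bP^1\times\{p,q\}}(V_0)$ are not $\bP^1$-bundles or copies of $\bP^1\times\bP^1$ as you assert, but conic bundles over $\bP^1$ with reducible fibers over those special points; and if you insisted on ``fibers over $\bP^1$ with only nodes at $p,q$, no worse singularities,'' the verification would fail for every member of the family you need.

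The argument is nonetheless repairable along the lines you hint at: the criterion of \cite[\S1]{ct-pirutka} only requires every fiber of $\widetilde{V_0}\ra V_0$ to be universally $\CH_0$-trivial, and a reducible conic (two lines meeting in a point) satisfies this just as a smooth conic does. The paper's proof consists exactly of the local analysis above, the observation that blowing up the two lines resolves $V_0$ with smooth conic fibers over general points and reducible conic fibers over the finitely many special points, and the conclusion that all these fibers are universally $\CH_0$-trivial, after which the specialization argument runs as you describe. So your proposal has the right skeleton, but its one substantive geometric verification is both missing and mis-predicted, and as written it does not amount to a proof.
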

\begin{proof} The singularities of
$V_0$ are along the lines $\ell_p:=\bP^1 \times \{p\}$ and 
$\ell_q:=\bP^1 \times \{q\}$.
The singularities of $B_0$
are ordinary double points along the general points of these lines
and cusps (analytically isomorphic to $x^2+y^3=0$) at a finite
number of special points.
For special $r\in \bP^1$ the local singularity type of $V_0$
at $(r,p)$ is of the form
$$w^2=x^2+ty^2+y^3,$$
where $\{x,y\}$ are local coordinates of $\bP^2$ centered at $p$,
$t$ is a local coordinate of $\bP^1$ centered at $r$,
and $w$ is used to realize the double cover over $\bP^1 \times \bP^2$.
Thus the singularities of $V_0$ are resolved by blowing up the lines
$$\widetilde{V_0}=\Bl_{\ell_p \cup \ell_q}(V_0) \ra V_0.$$
The exceptional fibers over the generic points of $\ell_p$
and $\ell_q$
are smooth conics; the fibers over special points are reducible conics.
This computation is similar to, but simpler than,
the singularity analysis of \cite[App.]{ct-pirutka}.

The key is the exceptional fibers are universally
$\CH_0$-trivial, in the sense of \cite[D\'ef.~1.2]{ct-pirutka}.
Applying the result on universal $\CH_0$-triviality
in \cite[\S1]{ct-pirutka}, we conclude that
$V$ fails to be universally $\CH_0$-trivial
if the same
holds for $\widetilde{V_0}$.
(See \cite[Lemma~1.3]{ACTP} and 
\cite[\S1]{Voisin} for the equivalence with integral decompositions
of the diagonal.)
\end{proof}

In \S\ref{subsect:222} we will show that very general double covers
of $\bP^1 \times \bP^1 \times \bP^1$ branched over a $(2,2,2)$
divisor do not admit integral decompositions of the diagonal.

\subsection{$d(V)=12,h^{1,2}(V)=9$}
The first part of the
second case is realized as a divisor in $\bP^2 \times \bP^2$ of
bidegree $(2,2)$, depending on $19$ parameters. Using either projection,
we obtain a conic bundle over $\bP^2$ with sextic discriminant. 
It is well known that the plane sextic can be chosen generically
\cite[\S9]{vG,HaVA}. The main result of \S\ref{sect:HKT} implies that very
general conic bundles over $\bP^2$ with sextic discriminant fail to be 
stably rational. That is, for a very general pair $(D,D'\ra D)$,
where $D$ is a plane sextic and $D'\ra D$ is a non-trivial \'etale
double cover, the corresponding conic bundle $X\ra \bP^2$ fails
to be stably rational. It follows that for very general $D$,
{\em every} double cover $D'\ra D$ is associated with a conic
bundle that is not stably rational. In particular,
this applies to the very general divisor of bidegree $(2,2)$
in $\bP^2 \times \bP^2$.

The second part of the second case is
a double cover $V$ of a hypersurface 
$\bF(1,2) \subset \bP^2\times \bP^2$
of bidegree $(1,1)$ 
branched over an anticanonical divisor $B$ of
bidegree $(2,2)$. This depends on 
$18$ parameters. Again, either projection induces a conic bundle
$V\ra \bP^2$ with sextic discriminant curve $D$. The Mori-Mukai classification
\cite{MoriMukai-manuscripta} shows this is a specialization
of the first part. 

The surface $B$ is a lattice polarized K3 surface of type
$$\Phi:=\begin{array}{c|cc}
 	& f_1 & f_2 \\
\hline
f_1  & 2 & 4 \\
f_2  & 4 & 2 
\end{array}
$$
and the generic such surface arises as a complete intersection
of forms of bidegree $(1,1)$ and $(2,2)$ in $\bP^2 \times \bP^2$. 

The branch curve of $\pi_i|B:B\ra \bP^2$
coincides with the locus where $\pi_i|V:V\ra \bP^2$ fails
to be smooth, i.e., the discriminant curve $D$.
This is a sextic plane curve that is not of general moduli---the
associated K3 double cover has Picard rank two. The technique
of \S\ref{sect:HKT} does not immediately apply in this case.

It is easy to use degeneration techniques to reduce this to cases
where there is no integral decomposition of
the diagonal. Consider a quartic surface $B_0\subset \bP^3$
with nodes $n_1$ and $n_2$ and minimal resolution
$\widetilde{B_0}$. Its Picard group takes the form
$$
\begin{array}{c|ccc}
	 & h & R_1 & R_2 \\
\hline
h & 4 & 0 & 0 \\
R_1 & 0 & -2 & 0 \\
R_2 & 0 & 0 & -2
\end{array}
$$
where $h$ is the pullback of the hyperplane class and the $R_i$
are the exceptional divisors.  The lattice $\Phi$ embeds into
this lattice
$$f_1=h-R_1, \quad f_2=h-R_2.$$
Projection from the nodes $n_1$ and $n_2$ gives a morphism
$$\widetilde{B_0} \ra \bP^2 \times \bP^2$$
with image $B_{\circ}$ a complete intersection of hypersurfaces of degrees 
$(1,1)$ and $(2,2)$. This extends to a birational map
$$\bP^3 \dashrightarrow 
\bF(1,2):=\bP(\Omega^1_{\bP^2}(1))\subset \bP^2 \times \bP^2$$
onto the divisor of bidegree $(1,1)$. To summarize:
\begin{lemm}A double cover of $\bP^3$ branched over a generic 
quartic surface with two nodes is birational to a double
cover of the complete flag variety $\bF(1,2)$ along a special anticanonical
divisor. 
\end{lemm}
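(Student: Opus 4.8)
The plan is to exhibit a concrete birational equivalence realizing the degeneration, by explicitly matching the two constructions through the common resolution $\widetilde{B_0}$. First I would fix a quartic surface $B_0\subset\bP^3$ with exactly two nodes $n_1,n_2$ in general position, and let $\widetilde{B_0}\ra B_0$ be its minimal resolution. Projection from $n_1$ realizes $\widetilde{B_0}$ as a double cover of $\bP^2$; similarly projection from $n_2$; combining these gives the morphism $\widetilde{B_0}\ra\bP^2\times\bP^2$ claimed in the preceding lemma, landing in $\bF(1,2)$. The key lattice-theoretic input is the embedding $\Phi\hookrightarrow\Pic(\widetilde{B_0})$ via $f_1=h-R_1,\ f_2=h-R_2$, together with the fact that $f_1,f_2$ are the two polarizing classes for the $\bP^2\times\bP^2$ model. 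So $B_\circ:=$ image of $\widetilde{B_0}$ is a complete intersection of type $(1,1)$ and $(2,2)$ in $\bP^2\times\bP^2$, i.e., a special anticanonical divisor on the flag threefold $\bF(1,2)$.

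Next I would track the double covers. Let $V_0\ra\bP^3$ be the double solid branched over $B_0$; it acquires ordinary double points over $n_1,n_2$. The birational map $\bP^3\dashrightarrow\bF(1,2)$ (blow up $n_1,n_2$, then contract appropriately onto the $(1,1)$-divisor) pulls back along the double cover to a birational map $V_0\dashrightarrow V$, where $V$ is the double cover of $\bF(1,2)$ branched over $B_\circ$. The point is that the branch datum is transported faithfully: over the generic point of $\bP^3$ the two double covers agree, and the indeterminacy of $\bP^3\dashrightarrow\bF(1,2)$ is concentrated over the nodes, which do not affect birational type. Dualizing, the exceptional divisors $R_1,R_2$ on $\widetilde{B_0}$ match the two $\bP^1\times\bP^2$-rulings' branch loci, so $B_\circ\in|-K_{\bF(1,2)}|$ has the prescribed bidegree $(2,2)$.

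The main obstacle is verifying that the generic two-nodal quartic surface really does map to a \emph{generic enough} point of the moduli of $(1,1)$-and-$(2,2)$ complete intersections, i.e., that the construction is dominant onto (a component of) that moduli space and does not land in some further special locus—and conversely that every such complete intersection with the lattice $\Phi$ arises this way. This is a parameter count plus a check that the period map on the lattice-polarized side is unobstructed: two-nodal quartics form a family of the expected dimension, and projecting from the nodes is generically injective on moduli by the Torelli theorem for K3 surfaces. I would carry this out by comparing the dimension $19-2=17$ of two-nodal quartics (modulo the choice of node positions, which is reabsorbed by the $\bPGL$-action on the target) with the dimension $18$ of $(2,2)$-divisors on a fixed $\bF(1,2)$, noting that fixing the two nodes rigidifies exactly the right number of moduli; then invoking surjectivity of the period map for $\Phi$-polarized K3 surfaces to conclude the images are Zariski-dense. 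Everything else—the explicit form of $\bF(1,2)=\bP(\Omega^1_{\bP^2}(1))$, the identification $h-R_i=f_i$, and the compatibility of the double covers—is routine once the resolution $\widetilde{B_0}$ is in hand.
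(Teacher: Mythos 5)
Your first two paragraphs reproduce the paper's argument: resolve the two nodes, use the lattice embedding $f_1=h-R_1$, $f_2=h-R_2$ to see that projection from $n_1,n_2$ maps $\widetilde{B_0}$ to a complete intersection of bidegrees $(1,1)$ and $(2,2)$ in $\bP^2\times\bP^2$, note that the ambient map $\bP^3\dashrightarrow\bF(1,2)=\bP(\Omega^1_{\bP^2}(1))$ is birational onto the $(1,1)$-divisor, and transport the double cover along this birational map. That is all the lemma asserts, and it is the paper's proof.

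Your third paragraph, however, identifies a ``main obstacle'' that is not part of the statement and then resolves it incorrectly. The lemma deliberately says \emph{special} anticanonical divisor: no claim of dominance onto the family of $(1,1)\cap(2,2)$ complete intersections is made, and none is needed. Indeed dominance fails: two-nodal quartic K3 surfaces have Picard rank at least $3$ (containing $h,R_1,R_2$) and form a $17$-dimensional family, while $\Phi$-polarized K3 surfaces have $18$ moduli and generic Picard rank $2$; so the images form a codimension-one locus, are not Zariski-dense, and the converse you propose (``every such complete intersection with lattice $\Phi$ arises this way'') is false. Invoking surjectivity of the $\Phi$-polarized period map cannot repair a strict dimension drop. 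The genericity relevant to the application runs in the opposite direction: the anticanonical double covers of $\bF(1,2)$ form one irreducible family (a linear system), the special members constructed here are birational to two-nodal quartic double solids, which fail to admit an integral decomposition of the diagonal by Voisin, and the specialization theorem then transfers this failure to the very general member. So your core construction is right and matches the paper, but you should delete the density claim rather than try to prove it.
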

By \cite[Th.~1.1]{Voisin} a very general such quartic double solid
fails to admit an integral decomposition of the diagonal.
Thus the same holds for very general Fano threefolds 
$V\ra \bF(1,2)$ and so these fail to be stably rational.

\subsection{$d(V)=14,h^{1,2}(V)=9$ }
The third case is the double cover of $\bP^3$ blown up at a point, with
anticanonical branch locus $B$ meeting the exceptional divisor transversally.
These conic bundles were addressed in \S\ref{sect:QDS} as degenerate
quartic double solids.

\subsection{$d(V)=12,h^{1,2}(V)=8$} \label{subsect:222}
The fourth case is a double cover 
$$
V\ra \bP^1 \times \bP^1 \times \bP^1
$$ 
branched over a divisor of degree $(2,2,2)$. These depend on 
$27-1-9=17$ parameters. For each projection onto
$\bP^1 \times \bP^1$ we obtain a conic bundle, with discriminant $D$ of
bidegree $(4,4)$.
Note that this is not generic; it has
equation
$$D=\{\det(M)=0 \}, \quad M=\left( \begin{matrix} M_{11} & M_{12} \\
						  M_{12} & M_{22} \end{matrix} 
				\right),$$
with 
$$M_{11},M_{12},M_{22} \in
				\Gamma(\cO_{\bP^1\times \bP^1}(2,2)).$$
This may be interpreted geometrically:
the K3 double cover 
$$
B\ra \bP^1 \times \bP^1
$$ 
has Picard group
$$\Pi:=\begin{array}{c|ccc}
	& E_1 & E_2 & E_3 \\
\hline
E_1 & 0 & 2 & 2 \\
E_2 & 2 & 0 & 2 \\
E_3 & 2 & 2 & 0
\end{array}
$$
In this basis, $D \equiv 4(E_1+E_2)$.
The curves $E_3$ and $2(E_1+E_2)-E_3$ are conjugate under the
involution associated with the first two factors,
which fixes $D$. Thus $E_1+E_2-E_3$ restricts to a two-torsion
divisor $\eta$ on $D$, which classifies the double cover $D'\ra D$.

Given that $D$ is not generic in its linear series, the techniques
of \S\ref{sect:HKT} do not apply directly. Clearly the monodromy
cannot act transitively on the non-trivial two-torsion of $D$,
as there is a distinguished element $\eta$. Keeping track of what
happens to $\eta$ as $D\rightsquigarrow D_1\cup D_2$ can be
delicate.

The quickest proof that the very 
general $V\ra \bP^1 \times \bP^1\times \bP^1$ fails to admit a
decomposition of the diagonal is via degeneration of
the branch locus. Let $B_0\subset \bP^3$
denote a quartic surface with nodes $n_1,n_2,n_3$ and minimal 
resolution $\widetilde{B_0}$. Let $h$ denote the pullback of the polarization
and $R_1,R_2,R_3$ the exceptional divisors over $n_1,n_2,n_3$:
$$\begin{array}{c|cccc}
	& h  & R_1 & R_2 & R_3 \\
\hline
h & 4 & 0 & 0 & 0 \\
R_1 & 0 & -2 & 0 & 0 \\
R_2 & 0 & 0 &  -2 & 0 \\
R_3 & 0 & 0 & 0 &  -2  
\end{array}$$
Note that $R_0=h-R_1-R_2-R_3$ is also a smooth rational curve in
$\widetilde{B_0}$.
We can embed $\Pi$ naturally into this lattice:
$$E_1=h-R_2-R_3,  \quad
E_2=h-R_1-R_3,  \quad
E_3=h-R_1-R_2.$$
These reflect elliptic fibrations induced by pencils of planes in $\bP^3$
passing through two of the three nodes.
Note that
$$E_i=R_0+R_i, \quad i=1,2,3,$$
which means that each elliptic fibration admits a fiber of Kodaira type
$I_2$ containing $R_0$ as a component.

The connection between $B_0\subset \bP^3$ and $(2,2,2)$
K3 surfaces in $\bP^1 \times \bP^1 \times \bP^1$ goes further. 
There is a birational map
$$\bP^3 \dashrightarrow \bP^1 \times \bP^1 \times \bP^1,
$$
where the map onto each factors is given by the pencil of planes
through a pair of nodes of $B_0$. This maps $B_0$ birationally onto a 
$(2,2,2)$ nodal K3 surface $B_{\circ}$, as $R_0$ is in the fiber of each
of the elliptic fibrations. Conversely, generic nodal $(2,2,2)$ surfaces 
$B_{\circ} \subset \bP^1 \times \bP^1 \times \bP^1$
yield quartic surfaces with three nodes.
To summarize:
\begin{lemm}
Generic double solids $V_0\ra \bP^3$ branched over a quartic
surface with three nodes yield double covers $V_{\circ} \ra \bP^1 \times
\bP^1 \times \bP^1$ branched over a nodal $(2,2,2)$ surface,
and {\em vice versa}.
\end{lemm}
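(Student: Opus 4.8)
The cleanest route is to make the birational map of $\bP^3$ explicit first and then read off its effect on $B_0$ from the lattice data already assembled above, where $E_1 = R_0+R_1$, $E_2=R_0+R_2$, $E_3=R_0+R_3$ with $R_0 = h-R_1-R_2-R_3$. For $\{i,j,k\}=\{1,2,3\}$ let $\ell_{jk}$ be the line through $n_j$ and $n_k$. A line meeting the quartic $B_0$ with multiplicity at least two at three points would meet it in $\geq 6 > 4$ points, hence lie on $B_0$; so for a generic $B_0$ the three nodes are not collinear and the lines $\ell_{12},\ell_{13},\ell_{23}$ form a genuine triangle. The pencil of planes through $\ell_{jk}$ gives a rational map $\psi_i:\bP^3\dashrightarrow\bP^1$, and I would first check that
$$\psi=(\psi_1,\psi_2,\psi_3):\bP^3\dashrightarrow \bP^1\times\bP^1\times\bP^1$$
is birational: for generic $x$ one recovers the planes $\langle x,n_j,n_k\rangle$ from $\psi(x)$, two of these already cut out the line $\langle x,n_k\rangle$, and the third plane meets that line only at $x$. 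The base locus of $\psi$ is the curve $\ell_{12}\cup\ell_{13}\cup\ell_{23}$, which does not contain the surface $B_0$; hence $\psi|_{B_0}:B_0\dashrightarrow B_\circ:=\overline{\psi(B_0)}$ is again birational. On the minimal resolution $\widetilde{B_0}$, the pencil $\psi_i|_{B_0}$ is the base-point-free elliptic pencil $|E_i|=|h-R_j-R_k|$ (a plane through $\ell_{jk}$ cuts $B_0$ in a quartic curve doubly through $n_j,n_k$), so $\psi$ lifts to a morphism $\widetilde{B_0}\to\bP^1\times\bP^1\times\bP^1$ with $\psi^*\cO(1,1,1)=E_1+E_2+E_3$.

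\textbf{Class and singularity of $B_\circ$.} From the Gram matrix of $E_1,E_2,E_3$ (zeros on the diagonal, $2$ off it) one computes $(E_1+E_2+E_3)^2=12$. Since $\widetilde{B_0}\to B_\circ$ is birational and $\cO(1,1,1)$ restricted to a divisor of class $(a,b,c)$ in $\bP^1\times\bP^1\times\bP^1$ has self-intersection $2(a+b+c)$, this forces $a+b+c=6$, and the symmetry among $n_1,n_2,n_3$ gives $(a,b,c)=(2,2,2)$. For the singular locus: the span of $E_1,E_2,E_3$ in $\Pic(\widetilde{B_0})=\langle h,R_1,R_2,R_3\rangle$ is nondegenerate, so its orthogonal complement is generated by the primitive class $R_0$; since $R_0\cdot E_i=0$ for all $i$ and $R_0$ is a smooth rational $(-2)$-curve, $R_0$ is the unique curve contracted by $\widetilde{B_0}\to B_\circ$. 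Thus $B_\circ$ is a $(2,2,2)$ surface whose only singularity, for generic $B_0$, is a single node.

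\textbf{Converse.} Starting instead from a generic $(2,2,2)$ surface $B_\circ\subset \bP^1\times\bP^1\times\bP^1$ with one node $q$, minimal resolution $\widetilde{B_\circ}$ and exceptional $(-2)$-curve $C_0$, set $E_i=\pi_i^*\cO(1)$ for the three coordinate projections (so $E_i^2=0$, $E_i\cdot E_j=2$, $E_i\cdot C_0=0$) and put
$$h:=E_1+E_2+E_3-2C_0.$$
Then $h^2=4$, $h$ is primitive and nef, and by standard K3 theory $|h|$ is base-point-free and defines a morphism onto a quartic surface $B_0\subset\bP^3$ contracting exactly the three disjoint $(-2)$-curves $R_i:=E_i-C_0$ (which satisfy $R_i^2=-2$, $R_i\cdot R_j=0$, $h\cdot R_i=0$); these become three nodes of $B_0$, while $C_0$ maps to a rational quartic curve. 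The identities $E_i=C_0+R_i$ and $h=E_1+E_2+E_3-2C_0$ show the two constructions are mutually inverse, and since the families of three-nodal quartic K3s and of one-nodal $(2,2,2)$ K3s are each irreducible and the construction is dominant on each, generic members correspond. The one step genuinely needing care is this converse—checking that $h$ is base-point-free on $\widetilde{B_\circ}$, equivalently that a generic three-nodal quartic arises in this way—which for the generic member reduces to the explicit list of $(-2)$-curves $C_0,R_1,R_2,R_3$ in $\Pic(\widetilde{B_\circ})$ and the fact that $h$ pairs nonnegatively with each of them.
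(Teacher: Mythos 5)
Your proposal is correct and takes essentially the same route as the paper: the birational map $\bP^3 \dashrightarrow \bP^1\times\bP^1\times\bP^1$ defined by the pencils of planes through pairs of nodes, the lattice identities $E_i=R_0+R_i$ with $R_0=h-R_1-R_2-R_3$, and the contraction of $R_0$ producing the node of the $(2,2,2)$ surface; you merely add details the paper leaves implicit (the computation $(E_1+E_2+E_3)^2=12$ pinning down the bidegree, the identification of $R_0$ as the unique contracted curve) and an explicit converse via $h=E_1+E_2+E_3-2C_0$, where the paper simply asserts the converse. The only point to tighten is cosmetic: birationality of $\psi|_{B_0}$ requires that $B_0$ not lie in the contracted locus (the plane through the three nodes), not merely off the base locus, which is immediate since $B_0$ is a quartic.
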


Voisin \cite[Th.~1.1]{Voisin}
has shown that a double solid
branched over a very general quartic surface with $r\le 7$
nodes fails to admit an integral decomposition of the diagonal.
Thus the same holds for a double cover of
$\bP^1 \times \bP^1 \times \bP^1$ branched over a very general
$(2,2,2)$ surface. We conclude that such threefolds fail
to be stably rational.

\bibliographystyle{alpha}
\bibliography{fano3}
\end{document}